\newcommand{\excise}[1]{}
\newtheorem{thm}{Theorem}[section]
\newtheorem{lemma}[thm]{Lemma}
\newtheorem{prop}[thm]{Proposition}
\newtheorem{prob}[thm]{Problem}
\theoremstyle{definition}
\newtheorem{example}[thm]{Example}
\newtheorem{remark}[thm]{Remark}
\newtheorem{defn}[thm]{Definition}
\newtheorem{notation}[thm]{Notation}
\numberwithin{equation}{section}
\newcommand{\ring}[1]{\ensuremath{\mathbb{#1}}}
\renewcommand\>{\rangle}
\newcommand\<{\langle}
\newcommand\RR{\ring{R}}
\newcommand\ZZ{\ring{Z}}
\DeclareMathOperator\Ap{Ap} 
\begin{document}

\mbox{}
\title{Augmented Hilbert series of numerical semigroups}

\author[J.\ Glenn]{Jeske Glenn}
\address{Mathematics Department\\University of Oregon\\Eugene, OR 97403}
\email{jeskeg@uoregon.edu}

\author[C.\ O'Neill]{Christopher O'Neill}
\address{Mathematics Department\\San Diego State University\\San Diego, CA 92182}
\email{cdoneill@sdsu.edu}

\author[V.~Ponomarenko]{Vadim Ponomarenko}
\address{Mathematics Department\\San Diego State University\\San Diego, CA 92182}
\email{vponomarenko@sdsu.edu}

\author[B.\ Sepanski]{Benjamin Sepanski}
\address{Mathematics Department\\Baylor University\\Waco, TX 76706}
\email{Ben\_Sepanski@baylor.edu}

\date{\today}

\begin{abstract}
A numerical semigroup $S$ is a subset of the non-negative integers containing $0$ that is closed under addition.  The Hilbert series of $S$ (a formal power series equal to the sum of terms $t^n$ over all $n \in S$) can be expressed as a rational function in $t$ whose numerator is characterized in terms of the topology of a simplicial complex determined by membership in $S$.  In this paper, we obtain analogous rational expressions for the related power series whose coefficient of $t^n$ equals $f(n)$ for one of several semigroup-theoretic invariants $f:S \to \RR$ known to be eventually quasipolynomial.  
\end{abstract}

\maketitle


\section{Introduction}
\label{sec:intro}

A \emph{numerical semigroup} is a subset $S \subset \ZZ_{\ge 0}$ containing $0$ that is closed under addition and has finite complement, and a \emph{factorization} of an element $n \in S$ is an expression of~$n$ as a sum of generators of $S$.  A clear trend that has emerged in the study of numerical semigroups is the eventually quasipolynomial behavior of arithmetic invariants derived from their factorization structure~\cite{numericalsurvey}.  More specifically, each of these invariants (which we call \emph{$S$-invariants}) is a function assigning to each element $n \in S$ a value determined by the possible factorizations of $n$ in $S$.  This includes invariants from discrete optimization such as maximum and minimum factorization length~\cite{elastsets}, distinct factorization length count~\cite{factorhilbert}, and maximum and minimum 0-norm~\cite{quasi0norm}, as well as more semigroup-theoretic invariants like the delta set~\cite{deltaperiodic}, $\omega$-primality~\cite{omegaquasi}, and the catenary degree~\cite{catenaryperiodic}, each of which agrees with a quasipolynomial for large input.  

When (eventually) quasipolynomial functions arise in combinatorial settings, there are several potential ways to study them:\ (i)~directly, using tools specific to the setting in question; (ii)~via combinatorial commutative algebra, using Hilbert functions of graded modules; and (iii)~via rational generating functions.  Approaches~(ii) and~(iii) were largely pioneered by Stanley~\cite{stanleycca}, among others, and carry with them powerful algebraic tools.  The eventually quasipolynomial behavior of each semigroup invariant mentioned above was initially examined using standard semigroup-theoretic tools, and more recently an approach using Hilbert functions was developed~\cite{factorhilbert}.  The goal of this paper is to initiate the use of approach~(iii) in studying $S$-invariants.  

To date, rational generating functions have been used to study several aspects of numerical semigroups \cite{continuousdiscretely,cyclotomicnumerical}, primarily using the \emph{Hilbert series}
$$\mathcal H(S;t) = \sum_{n \in S} t^n = \frac{\mathcal K(S;t)}{(1 - t^{n_1}) \cdots (1 - t^{n_k})}$$
associated to each numerical semigroup $S = \<n_1, \ldots, n_k\>$.  A natural consequence of the Hilbert syzygy theorem from commutative algebra \cite{cca} states that the numerator $\mathcal K(S;t)$ in the second expression above is a polynomial in $t$ whose coefficients are obtained from the graded Betti numbers of the defining toric ideal of $S$.  An alternative characterization of the coefficients of $\mathcal K(S;t)$ (stated formally in Theorem~\ref{t:bigthm}) uses the topology of a simplicial complex determined by membership in $S$ \cite{squarefreedivisorcomplex}.  One of the key selling points of the latter characterization is that it is given entirely in terms of the underlying semigroup $S$, without the theoretical overhead often necessary when incorporating commutative algebra techniques.  

The primary goal of this paper is to obtain analogous rational expressions for various \emph{augmented Hilbert series}, which we define to be series of the form 
$$\mathcal H_f(S;t) = \sum_{n \in S} f(n) t^n$$
where $f$ is some $S$-invariant admitting eventually quasipolynomial behavior.  We give two such expressions:\ (i)~when $f(n)$ counts the number of distinct factorization lengths of $n$ (Proposition~\ref{p:weightedeuler}) and (ii)~when $f(n)$ is the maximum or minimum factorization length of $n$ (Theorem~\ref{t:augmentedbigthm}).  Examples~\ref{e:chivschihat} and~\ref{e:lensetsize} illustrate the need for distinct rational forms for these invariants.  We also specify how to obtain the \emph{dissonance point} of each quasipolynomial function $f$ (i.e.\ the optimal bound on the start of quasipolynomiality) from the numerator of its rational generating function (Theorem~\ref{t:dissonance}).  Lastly, we examine these rational expressions under the operation of gluing numerical semigroups (Section~\ref{sec:gluings}) and give a closed form for each rational expression in the special case when $S$ has 2 generators (Section~\ref{sec:2gens}).

\section{Background}
\label{sec:background}


\begin{defn}\label{d:numericalsemigroup}
A \emph{numerical semigroup} $S$ is a cofinite, additive subsemigroup of $\ZZ_{\ge 0}$.  When we write $S = \<n_1, \ldots, n_k\>$ in terms of generators, we assume $n_1 < \cdots < n_k$.  The~\emph{Frobenius number} of $S$ is the largest integer $\mathsf F(S)$ lying in the complement of $S$.  A~\emph{factorization} of $n \in S$ is an expression 
$$n = a_1n_1 + \cdots + a_kn_k$$
of $n$ as a sum of generators of $S$, and the \emph{length} of a factorization is the sum $a_1 + \cdots + a_k$.  The \emph{set of factorizations} of $n \in S$ is 
$$\mathsf Z_S(n) = \{a \in \ZZ_{\ge 0}^k : n = a_1n_1 + \cdots + a_kn_k\}$$
and the \emph{length set} of $n$ is the set 
$$\mathsf L_S(n) = \{a_1 + \cdots + a_k : a \in \mathsf Z_S(n)\}$$
of all possible factorization lengths of $n$.  The \emph{maximum and minimum factorization length functions}, and the \emph{length denumerant} function, are defined as 
$$\mathsf M_S(n) = \max \mathsf L_S(n) \qquad \mathsf m_S(n) = \min \mathsf L_S(n) \qquad \text{ and } \qquad \mathsf l_S(n) = |\mathsf L_S(n)|,$$
respectively.  The \emph{Ap\'ery set} of an element $n \in S$ is the set 
$$\Ap(S;n) = \{m \in S : m - n \notin S\}.$$
It can be easily shown that $|\Ap(S;n)| = n$ for any $n \in S$.  
\end{defn}

\begin{notation}\label{n:numericalsemigroup}
Unless otherwise stated, thoughout the paper, $S = \<n_1, \ldots, n_k\>$ denotes a numerical semigroup with a fixed generating set $n_1 < \cdots < n_k$.  
\end{notation}

\begin{defn}\label{d:invariant}
A function $f:\ZZ \to \RR$ is an \emph{$S$-invariant} if $f(n) = 0$ for all $n \notin S$.  
\end{defn}

\begin{defn}\label{d:quasipolynomial}
A function $f:\ZZ \to \RR$ is an \emph{$r$-quasipolynomial} of degree $\alpha$ if 
$$f(n) = a_\alpha(n)n^\alpha + \cdots + a_1(n)n + a_0(n)$$
for periodic functions $a_0, \ldots, a_\alpha$, whose periods all divide $r$, with $a_\alpha$ not identically~0.  We say $f$ is \emph{eventually quasipolynomial} if the above equality holds for all $n \gg 0$.  
\end{defn}

\begin{thm}[{\cite{elastsets,factorhilbert}}]\label{t:invariantquasi}
For sufficiently large $n \in S$, 
\begin{center}
$\mathsf M_S(n + n_1) = \mathsf M_S(n) + 1$, \qquad
$\mathsf m_S(n + n_k) = \mathsf m_S(n) + 1$, \\
and \qquad 
$\mathsf l_S(n + n_1n_k) = \mathsf l_S(n) + \tfrac{1}{d}(n_k - n_1)$,
\end{center}
where $d = \gcd\{n_i - n_{i-1} : i = 2, \ldots, d\}$.  In particular, the $S$-invariants $\mathsf M_S$, $\mathsf m_S$, and $\mathsf l_S$ are each eventually quasilinear.  
\end{thm}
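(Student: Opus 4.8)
The plan is to establish the three displayed recurrences one at a time and then read off the eventual quasilinearity. Two of the six relevant inequalities are immediate and hold for every $n \in S$: appending one copy of $n_1$ to a longest factorization of $n$ shows $\mathsf M_S(n+n_1) \ge \mathsf M_S(n)+1$, and appending one copy of $n_k$ to a shortest factorization of $n$ shows $\mathsf m_S(n+n_k) \le \mathsf m_S(n)+1$. The real content is in the reverse inequalities, which I would deduce from the claim that \emph{for all $n \gg 0$, every maximum-length factorization of $n$ uses the generator $n_1$, and every minimum-length factorization of $n$ uses $n_k$.}

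To prove the first half, write $\Ap(S;n_1) = \{w_0,\dots,w_{n_1-1}\}$ with $w_j \equiv j \pmod{n_1}$, so that $w_j$ is the least element of $S$ in its residue class. For $m \in S$ with $m \equiv j$, prepending $(m-w_j)/n_1$ copies of $n_1$ to a fixed factorization of $w_j$ produces a factorization of $m$ of length at least $m/n_1 - c$, where $c = \max_j w_j/n_1$ depends only on $S$; meanwhile any factorization of $m$ that avoids $n_1$ uses only parts of size at least $n_2$ and hence has length at most $m/n_2$. Since $n_1 < n_2$, the bound $m/n_1 - c$ exceeds $m/n_2$ once $m \gg 0$, so no longest factorization of $m$ can avoid $n_1$. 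The second half is the mirror argument using $\Ap(S;n_k)$: every $m \in S$ has a factorization of length at most $m/n_k + c'$ for a constant $c'$, while any factorization avoiding $n_k$ uses parts of size at most $n_{k-1}$ and so has length at least $m/n_{k-1} > m/n_k + c'$ for $m \gg 0$. Granting the claim, fix $n$ large enough that it applies to $n+n_1$ and to $n+n_k$: deleting the guaranteed copy of $n_1$ from a longest factorization of $n+n_1$ gives a factorization of $n$, so $\mathsf M_S(n) \ge \mathsf M_S(n+n_1)-1$; deleting a copy of $n_k$ from a shortest factorization of $n+n_k$ gives $\mathsf m_S(n) \le \mathsf m_S(n+n_k)-1$. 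Combined with the easy halves, these are the first two identities, and telescoping each within a fixed residue class shows $\mathsf M_S$ eventually equals $n/n_1$ plus a function of period dividing $n_1$ and $\mathsf m_S$ eventually equals $n/n_k$ plus a function of period dividing $n_k$, so both are eventually quasilinear.

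For the third identity I would invoke the structural input that for $n \gg 0$ the length set $\mathsf L_S(n)$ is an arithmetic progression with common difference exactly $d$ (this is where I lean on \cite{elastsets,factorhilbert}), so that $\mathsf l_S(n) = \tfrac1d(\mathsf M_S(n)-\mathsf m_S(n))+1$ for $n \gg 0$. Iterating the first identity $n_k$ times gives $\mathsf M_S(n+n_1n_k) = \mathsf M_S(n)+n_k$, and iterating the second $n_1$ times gives $\mathsf m_S(n+n_1n_k) = \mathsf m_S(n)+n_1$, both for $n \gg 0$; subtracting,
$$\mathsf M_S(n+n_1n_k) - \mathsf m_S(n+n_1n_k) = \bigl(\mathsf M_S(n)-\mathsf m_S(n)\bigr) + (n_k-n_1).$$
Since $d$ divides $n_k - n_1 = \sum_{i=2}^k (n_i - n_{i-1})$, dividing by $d$ and adding $1$ yields $\mathsf l_S(n+n_1n_k) = \mathsf l_S(n) + \tfrac1d(n_k-n_1)$, and telescoping within residue classes mod $n_1n_k$ gives eventual quasilinearity of $\mathsf l_S$.

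The step I expect to be the main obstacle is exactly that structural input for $\mathsf l_S$: that $\mathsf L_S(n)$ is eventually an arithmetic progression whose common difference is \emph{exactly} $d$, not merely a proper multiple of it. That $\mathsf L_S(n)$ eventually becomes an arithmetic progression with difference a multiple of $d$ can be argued by connecting any two factorizations of a given element through length-altering ``trades'' and computing the resulting gcd; but forcing the difference down to exactly $d$ for all large $n$ requires producing, for every such $n$, a pair of factorizations of $n$ whose lengths differ by exactly $d$, which ultimately depends on $d$ occurring as a minimal positive gap between factorization lengths somewhere in $S$ — the nontrivial ingredient supplied by the cited work. Everything else reduces to the Ap\'ery-set length estimates above and routine bookkeeping.
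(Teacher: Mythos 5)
First, note that the paper does not prove this theorem; it is quoted from \cite{elastsets,factorhilbert}, so there is no internal proof to compare against. Judged on its own terms, your treatment of the first two identities is correct and is essentially the standard argument: the Ap\'ery-set bound $\mathsf M_S(m)\ge m/n_1-c$ versus the bound $m/n_2$ for $n_1$-free factorizations forces every longest factorization of a large element to contain $n_1$, and dually for $\mathsf m_S$ and $n_k$; the easy halves plus deletion of the guaranteed generator then give the recurrences, and telescoping gives quasilinearity.

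The third identity is where there is a genuine gap, and it is not quite the one you flagged. Your structural input --- that $\mathsf L_S(n)$ is eventually a full arithmetic progression with common difference $d$, so that $\mathsf l_S(n)=\tfrac1d(\mathsf M_S(n)-\mathsf m_S(n))+1$ --- is false in general, and the paper's own running example witnesses this. In $S=\<6,9,20\>$ one has $d=\gcd(3,11)=1$, yet for every large $n\equiv 0\pmod{60}$ the minimum length $\mathsf m_S(n)=n/20$ is attained only by the all-$20$s factorization, and the next achievable length is $\mathsf m_S(n)+4$ (replace three $20$s by $6a+9b=60$, whose minimal length is $7$); the values $\mathsf m_S(n)+1,\mathsf m_S(n)+2,\mathsf m_S(n)+3$ are never lengths. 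So the length set has a persistent gap of size $4$ near its minimum, and the issue is not whether the common difference is exactly $d$ rather than a multiple of it, but that $\mathsf L_S(n)$ need not be an arithmetic progression at all: it is an AP with difference $d$ minus a set of ``missing'' values concentrated near the two ends. The correct input --- visible in the paper's proof of Proposition~\ref{p:finitedegree} --- is that $\mathsf l_S(n)=\tfrac1d(\mathsf M_S(n)-\mathsf m_S(n))-l_0(n)$ for large $n$, where $l_0$ is a (generally nonconstant) periodic function of period dividing $n_1n_k$ counting those missing values; proving that this count is eventually periodic is the genuinely nontrivial content of \cite{factorhilbert}, and your argument does not supply it. Once that is granted, your telescoping of $\mathsf M_S$ and $\mathsf m_S$ over $n_1n_k$ does yield the stated recurrence for $\mathsf l_S$, since $l_0(n+n_1n_k)=l_0(n)$.
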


\begin{defn}\label{d:hilbertseries}
The \emph{Hilbert series} of $S$ is the formal power series 
$$\mathcal H(S;t) = \sum_{n \in S} t^n \in \ZZ[\![t]\!].$$
Given $n \in S$, the \emph{squarefree divisor complex} $\Delta_n$ is a simplicial complex on the ground set $[k] = \{1, \ldots, k\}$ where $F \in \Delta_n$ if $n - n_F \in S$, where $n_F = \sum_{i \in F} n_i$.  The \emph{Euler characteristic} of a simplicial complex $\Delta$ is the alternating sum
$$\chi(\Delta) = \sum_{F \in \Delta_n} (-1)^{|F|}.$$
\end{defn}

\begin{remark}\label{r:eulercharacteristic}
The definition of Euler characteristic above differs slightly from the usual topological definition, but has the advantage that $\chi(\Delta) = 0$ for any contractible simplicial complex $\Delta$.  
\end{remark}

\begin{thm}[{\cite{squarefreedivisorcomplex}}]\label{t:bigthm}
The Hilbert series of $S$ can be written as
$$\mathcal H(S;t) = \sum_{n \in S} t^n = \frac{\sum_{a \in \Ap(S; n_1)} t^{a}}{1 - t^{n_1}} = \frac{\sum_{m \in S} \chi(\Delta_m) t^m}{(1 - t^{n_1}) \cdots (1 - t^{n_k})},$$
where both numerators have finitely many terms.  
\end{thm}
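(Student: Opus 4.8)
The statement packages two equalities, which I would prove in turn. The first, $\mathcal H(S;t) = (t^{a_1}+t^{a_2}+\cdots)/(1-t^{n_1})$, is the classical Apéry-set decomposition of $S$, and the second is a formal identity in $\ZZ[\![t]\!]$ obtained by clearing denominators and comparing coefficients.

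For the first equality, the plan is to show that every $n \in S$ has a unique expression $n = a + j n_1$ with $a \in \Ap(S;n_1)$ and $j \in \ZZ_{\ge 0}$. Existence: let $j$ be the largest integer with $n - j n_1 \in S$ — which exists since $S$ is cofinite — and set $a = n - j n_1$; then $a \in S$ while $a - n_1 \notin S$, so $a \in \Ap(S;n_1)$. Uniqueness: if $a, a' \in \Ap(S;n_1)$ satisfy $a' - a = c n_1$ with $c \ge 1$, then $a' - n_1 = a + (c-1)n_1 \in S$, contradicting $a' \in \Ap(S;n_1)$; hence the residue of $n$ modulo $n_1$ pins down $a$, and then $j$ is forced. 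Summing the geometric series $\sum_{j \ge 0} t^{a + j n_1} = t^a/(1-t^{n_1})$ over $a \in \Ap(S;n_1)$ yields $\mathcal H(S;t) = \big(\sum_{a \in \Ap(S;n_1)} t^a\big)/(1-t^{n_1})$, whose numerator has exactly $|\Ap(S;n_1)| = n_1$ terms (Definition~\ref{d:numericalsemigroup}).

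For the second equality, I would multiply the first equality through by $\prod_{i=1}^k (1-t^{n_i})$, obtaining
$$\mathcal H(S;t)\prod_{i=1}^k(1-t^{n_i}) = \Big(\sum_{a \in \Ap(S;n_1)} t^a\Big)\prod_{i=2}^k(1-t^{n_i}),$$
whose right-hand side is manifestly a polynomial — which already settles the claim that the numerator $\sum_{m\in S}\chi(\Delta_m)t^m$ has finitely many terms. It then suffices to identify this polynomial. Expanding $\prod_{i=1}^k(1-t^{n_i}) = \sum_{F \subseteq [k]}(-1)^{|F|}t^{n_F}$ and multiplying by $\mathcal H(S;t) = \sum_{n \in S} t^n$, the coefficient of $t^m$ on the left is
$$\sum_{\substack{F \subseteq [k] \\ m - n_F \in S}}(-1)^{|F|} = \sum_{F \in \Delta_m}(-1)^{|F|} = \chi(\Delta_m),$$
directly from the definition of the squarefree divisor complex (Definition~\ref{d:hilbertseries}). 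When $m \notin S$ no subset contributes, since $m - n_F \in S$ would force $m = (m - n_F) + n_F \in S$; thus $\Delta_m$ has no faces and $\chi(\Delta_m) = 0$, which is why the sum ranges over $m \in S$. Dividing back by $(1-t^{n_1})\cdots(1-t^{n_k})$ gives the second equality.

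The step requiring the most care is not so much a mathematical obstacle as bookkeeping — the degenerate case $m \notin S$, the void-complex convention (Remark~\ref{r:eulercharacteristic}), and verifying that the stated numerators are honestly polynomials; the only real content lies in the bijective Apéry decomposition of the first step. If one instead wants the finiteness of $\sum_{m}\chi(\Delta_m)t^m$ established intrinsically rather than via the first equality, the quickest route is to observe that once $m > F(S) + n_1 + \cdots + n_k$ we have $m - n_F > F(S)$ for every $F \subseteq [k]$, so $\Delta_m$ is the full simplex on $[k]$ and $\chi(\Delta_m) = \sum_{F \subseteq [k]}(-1)^{|F|} = (1-1)^k = 0$, using that $k \ge 1$.
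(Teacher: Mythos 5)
Your proof is correct. Note, however, that the paper does not prove this theorem at all --- it is quoted from the cited reference (Bruns--Herzog) and used as a black box --- so there is no in-paper argument to compare against. That said, your derivation of the second equality (expand $\prod_{i=1}^k(1-t^{n_i})=\sum_{F\subseteq[k]}(-1)^{|F|}t^{n_F}$, multiply against $\sum_{n\in S}t^n$, and read off the coefficient of $t^m$ as $\chi(\Delta_m)$) is precisely the specialization of the paper's own proof of Proposition~\ref{p:weightedeuler} to the constant invariant $f=\mathbf{1}_S$, so your method is fully consistent with the techniques the paper develops. Your Ap\'ery-set decomposition for the first equality and both finiteness arguments are sound; the only microscopic slip is attributing the existence of the maximal $j$ with $n-jn_1\in S$ to cofiniteness of $S$, when the real reason is simply that $S\subseteq\ZZ_{\ge 0}$ forces $n-jn_1\notin S$ once $jn_1>n$.
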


\begin{example}\label{e:bigthm}
For $S = \<6, 9, 20\>$, Theorem~\ref{t:bigthm} yields
$$\mathcal H(S;t) = \frac{1 + t^9 + t^{20} + t^{29} + t^{40} + t^{49}}{1 - t^6} = \frac{1 - t^{18} - t^{60} + t^{78}}{(1 - t^6)(1 - t^9)(1 - t^{20})}.$$
Here, $\Ap(S;6) = \{0, 49, 20, 9, 40, 29\}$ and each entry is distinct modulo 6.  The elements
$$18 = 3 \cdot 6 = 2 \cdot 9 \qquad \text{and} \qquad 60 = 4 \cdot 6 + 4 \cdot 9 = 3 \cdot 20$$
are, respectively, the first element that can be factored using $6$'s and $9$'s and the first element that can be factored using 6's, 9's, and 20's.  In particular, these two elements encode minimal relations between the generators of $S$, viewed as minimal ``trades'' from one factorization to another.  Moreover, 
$$78 = 7 \cdot 6 + 4 \cdot 9 = 2 \cdot 9 + 3 \cdot 20$$
is the first element in which two distinct sequences of trades between factorizations are possible:\ one can perform the exchange  $3 \cdot 6 \rightsquigarrow 2 \cdot 9$ followed by $4 \cdot 6 + 4 \cdot 9 \rightsquigarrow 3 \cdot 20$, or these trades can be applied in the reverse order.  This represents a ``relation between minimal relations''.  These properties are encoded in the element's respective squarefree divisor complexes, since $\Delta_{18}$ and $\Delta_{60}$ are each disconnected, and $\Delta_{78}$ is connected but has nontrivial 1-dimensional homology.  
\end{example}

\begin{remark}\label{r:numterms}
One remarkable aspect of Theorem~\ref{t:bigthm} is that simple algebraic manipulation of the rational expression of the Hilbert series reveals additional structural information about the underlying semigroup.  Indeed, cancelling all common factors in Example~\ref{e:bigthm} yields $\mathsf P_S(t)/(1 - t)$ (see~\cite{cyclotomicnumerical}), where
$$\begin{array}{r@{}c@{}l@{}c@{}l@{}c@{}l@{}c@{}l@{}c@{}l@{}c@{}l@{}c@{}l@{}c@{}l@{}c@{}l@{}c@{}l@{}c@{}l@{}c@{}l}
\mathsf P_S(t) &{}={}&
1 &{}-{}& t &{}+{}& t^{6} &{}-{}& t^{7} &{}+{}& t^{9} &{}-{}& t^{10} &{}+{}& t^{12} &{}-{}& t^{13} &{}+{}& t^{15} &{}-{}& t^{16} &{}+{}& t^{18} \\
&&  &{}-{}& t^{19} &{}+{}& t^{20} &{}-{}& t^{22} &{}+{}& t^{24} &{}-{}& t^{25} &{}+{}& t^{26} &{}-{}& t^{28} &{}+{}& t^{29} &{}-{}& t^{31} &{}+{}& t^{32} \\
&&  &{}-{}& t^{34} &{}+{}& t^{35} &{}-{}& t^{37} &{}+{}& t^{38} &{}-{}& t^{43} &{}+{}& t^{44}
\end{array}$$
has significantly more terms than the numerator of either form in Theorem~\ref{t:bigthm}.  This~is not a coincidence:\ since they represent the same power series, the fewer terms that appear in a particular expression, the more information each term must encode.  
\end{remark}

\section{Numerators of Augmented Hilbert series}
\label{sec:numerators}

In this section, we formally introduce augmented Hilbert series of a general semigroup invariant $f$ (Definition~\ref{d:weightedeuler}), present two rational expressions in the spirit of Theorem~\ref{t:bigthm} (Proposition~\ref{p:weightedeuler} and Theorem~\ref{t:augmentedbigthm}), and illustrate and compare their use when $f$ is one of the $S$-invariants appearing in Theorem~\ref{t:invariantquasi} (Examples~\ref{e:chivschihat} and~\ref{e:lensetsize}).  

\begin{defn}\label{d:weightedeuler}
Fix an $S$-invariant $f$.  The \emph{augmented Hilbert series} of $S$ with respect to $f$ is the formal power series
$$\mathcal H_f(S;t) = \sum_{n \in S} f(n) t^n.$$
Given $n \in S$, the \emph{weighted Euler characteristic of $\Delta_n$} is defined as
$$\chi_f(\Delta_n) = \sum_{F \in \Delta_n} (-1)^{|F|} f(n - n_F),$$
and the \emph{augmented Euler characteristic of $\Delta_n$} is defined as
$$\widehat \chi_f(\Delta_n) = \sum_{F \in \Delta} (-1)^{|F|} (f(n - n_F) + |F|).$$
\end{defn}

\begin{example}\label{e:weightedeuler}
Let $S = \<6,9,20\>$.  The complex $\Delta_{138}$ is given in Figure~\ref{f:weightedeuler}, and each face $F$ is labeled with the value $\mathsf M_S(138 - n_F)$.  Together with $\mathsf M_S(138) = 23$ as the label for the empty face, we obtain 
$$\chi_{\mathsf M_S}(\Delta_{138}) = \widehat \chi_{\mathsf M_S}(\Delta_{138}) = 0,$$
in part because the label of each face containing the vertex 6 matches its label on the face obtained by deleting 6.  
\end{example}

\begin{figure}
\begin{center}
\includegraphics[width=1.8in]{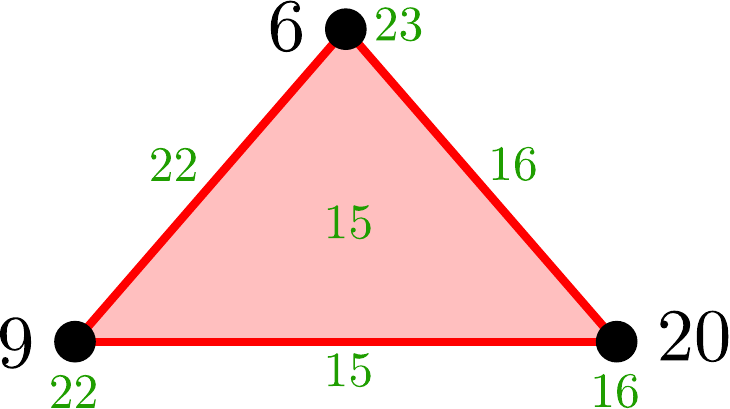}
\end{center}
\caption{The complex $\Delta_{138}$ in $S = \<6,9,20\>$ in Example~\ref{e:weightedeuler}.
}
\label{f:weightedeuler}
\end{figure}

\begin{prop}\label{p:weightedeuler}
Fix an $S$-invariant $f$.  
For any fixed $p \in \ZZ_{\ge 1}$, we have
$$\mathcal H_f(S;t) = \frac{\sum_{n \in S} (f(n) - 2f(n - p) + f(n - 2p)) t^n}{(1 - t^p)^2} = \frac{\sum_{n \in S} \chi_f(\Delta_n) t^n}{\prod_{i = 1}^{k}(1-t^{n_i})}.$$
\end{prop}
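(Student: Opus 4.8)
The plan is to verify both equalities purely by clearing denominators and regrouping the resulting sums by the exponent of $t$; no topology enters beyond the definitions of $\Delta_n$ and $\chi_f$. I will use throughout that $f$ is an $S$-invariant, so $f(m)=0$ for every $m\notin S$ (in particular for $m<0$), and that all series lie in $\RR[\![t]\!]$, where multiplying by a polynomial keeps each coefficient a finite sum, so the index shifts below are valid termwise.

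For the first equality I would multiply $\mathcal H_f(S;t)=\sum_m f(m)t^m$ by $(1-t^p)^2 = 1-2t^p+t^{2p}$ and reindex the three resulting series by $m$, $m+p$, $m+2p$ respectively; the coefficient of $t^n$ then collects to $f(n)-2f(n-p)+f(n-2p)$, which is exactly the claimed numerator (the displayed sum is supported on $S$ once one notes $f$ vanishes off $S$ — and in the intended applications $p$ is a generator, for which $n\notin S$ forces $n-p,n-2p\notin S$). Dividing back by $(1-t^p)^2$ returns $\mathcal H_f(S;t)$.

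The second equality is the $f$-weighted version of the $\prod_i(1-t^{n_i})$ form in Theorem~\ref{t:bigthm}, and I would run the same argument. Expand
$$\prod_{i=1}^{k}(1-t^{n_i}) = \sum_{F\subseteq[k]}(-1)^{|F|}t^{n_F},$$
multiply by $\sum_{m\in S}f(m)t^m$, and group by total degree $n = m+n_F$: the coefficient of $t^n$ is $\sum_{F\subseteq[k]}(-1)^{|F|}f(n-n_F)$. For $F\notin\Delta_n$ we have $n-n_F\notin S$, hence $f(n-n_F)=0$, so those terms drop out and what remains is $\sum_{F\in\Delta_n}(-1)^{|F|}f(n-n_F)=\chi_f(\Delta_n)$. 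Moreover, if $n\notin S$ then $\Delta_n$ has no faces (a face $F$ would give $n=(n-n_F)+n_F\in S$), so $\chi_f(\Delta_n)=0$ and the numerator is supported on $S$ as written. Dividing through by $\prod_i(1-t^{n_i})$ recovers $\mathcal H_f(S;t)$, completing the proof.

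I do not expect a genuine obstacle: the whole content is the bookkeeping identity $\sum_{F\subseteq[k]}(-1)^{|F|}f(n-n_F)=\chi_f(\Delta_n)$, valid because $f$ kills $\ZZ\setminus S$, together with the routine (but worth stating carefully) fact that all index manipulations take place over $\RR[\![t]\!]$ and involve only multiplication by polynomials, so no convergence question arises.
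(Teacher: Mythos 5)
Your proof is correct and follows essentially the same route as the paper's: clear each denominator, reindex by the exponent of $t$, and use that $f$ vanishes off $S$ to identify the coefficient of $t^n$ with $\chi_f(\Delta_n)$ (the paper runs this last computation from $\sum_m \chi_f(\Delta_m)t^m$ toward the product, you run it in reverse, which is the same argument). Your parenthetical observation that restricting the first numerator to $n\in S$ really needs $p\in S$ (so that $n\notin S$ forces $n-p,\,n-2p\notin S$) is a genuine subtlety the paper's proof glosses over, and it is exactly what justifies the displayed form of that numerator.
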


\begin{proof}
Clearing respective denominators yields
$$\begin{array}{r@{}c@{}l}
\displaystyle (1 - t^p)^2 \sum_{n \in S} f(n) t^n
&{}={}& \displaystyle \sum_{n \in S} f(n) t^n - \sum_{n \in S} 2f(n) t^{n + p} + \sum_{n \in S} f(n) t^{n + 2p} \\
&{}={}& \displaystyle \sum_{n \in S} (f(n) - 2f(n - p) + f(n - 2p)) t^n,
\end{array}$$
which proves the first equality, and the second equality follows from
$$\begin{array}{r@{}c@{}l}
\displaystyle\sum_{m \in S} \chi_f(\Delta_m) t^m
&{}={}& 
\displaystyle\sum_{m \in S} \sum_{\substack{F \subseteq [k] \\ m - n_F \in S}} (-1)^{|F|}f(m - n_F) t^m
= \displaystyle\sum_{A \subseteq [k]} \sum_{n \in S} (-1)^{|A|} f(n) t^{n + n_A} \\
&{}={}& \displaystyle\bigg( \sum_{A \subseteq [k]} (-1)^{|A|} t^{n_A} \bigg) \bigg( \sum_{n \in S} f(n)t^n \bigg)
= \displaystyle\bigg( \prod_{i = 1}^k (1 - t^{n_i}) \bigg) \bigg( \sum_{n \in S} f(n)t^n \bigg),
\end{array}$$
where the second step uses the substitution $m = n + n_A$.  
\end{proof}

\begin{example}\label{e:chivschihat}
For $S = \<9,10,23\>$, we have
$$\sum_{n \in S} \chi_{\mathsf M_S}(n) t^n =
\begin{array}[t]{@{}l@{}c@{}r@{}c@{}r@{}c@{}r@{}c@{}r@{}c@{}r@{}c@{}r@{}c@{}r@{}c@{}r@{}c@{}r@{}c@{}r}
t^{9} &{}+{}& t^{10} &{}+{}& t^{18} &{}+{}& t^{20} &{}+{}& t^{23} &{}+{}& t^{27} &{}+{}& t^{30} &{}+{}& t^{36} &{}+{}& t^{40} &{}+{}& t^{45} \\
&{}-{}& t^{46} &{}-{}& 3t^{50} &{}+{}& t^{54} &{}-{}& t^{55} &{}-{}& t^{56} &{}-{}& t^{59} &{}-{}& 4t^{63} &{}-{}& t^{64} &{}-{}& t^{66} \\
&{}-{}& t^{68} &{}+{}& 2t^{73} &{}-{}& t^{76} &{}-{}& t^{77} &{}+{}& 3t^{86} &{}-{}& t^{90} &{}+{}& t^{113},
\end{array}$$
whereas
$$\sum_{n \in S} \widehat \chi_{\mathsf M_S}(n) t^n =
\begin{array}{@{}l@{}c@{}l@{}c@{}l@{}c@{}l@{}c@{}l@{}c@{}l@{}c@{}l@{}c@{}l@{}c@{}l@{}c@{}l}
&{}-{}&2t^{46} &{}-{}& 4t^{50} &{}-{}& 5t^{63} &{}+{}& 5t^{73} &{}+{}& 6t^{86} &{}-{}& t^{90} &{}+{}& t^{113}.
\end{array}$$
This difference in number of terms occurs in nearly every example of $\mathcal H_{\mathsf M_S}(S;t)$ the authors have computed, and illustrates the primary reason for Theorem~\ref{t:augmentedbigthm}:\ filtering many of the extraneous terms from the first expression above.  At play here is the philosophy discussed in Remark~\ref{r:numterms}, namely that expressions with fewer terms necessarily encode more combinatorial information per term.  
\end{example}

\begin{example}\label{e:lensetsize}
For $S = \<9,10,23\>$ as in Example~\ref{e:chivschihat}, the polynomials
$$\sum_{n \in S} \chi_{\mathsf l_S}(n) t^n = 1 - t^{140}$$
and
$$\sum_{n \in S} \widehat \chi_{\mathsf l_S}(n) t^n =
\begin{array}[t]{@{}l@{}c@{}r@{}c@{}r@{}c@{}r@{}c@{}r@{}c@{}r@{}c@{}r@{}c@{}r@{}c@{}r@{}c@{}r@{}c@{}r}
1 &{}-{}& t^{9} &{}-{}& t^{10} &{}-{}& t^{18} &{}-{}& t^{20} &{}-{}& t^{23} &{}-{}& t^{27} &{}-{}& t^{30} &{}-{}& t^{36} &{}-{}& t^{40} \\
&{}-{}& t^{45} &{}-{}& t^{46} &{}-{}& t^{50} &{}-{}& t^{54} &{}+{}& t^{55} &{}+{}& t^{56} &{}+{}& t^{59} &{}-{}& t^{63} &{}+{}& t^{64} \\
&{}+{}& t^{66} &{}+{}& t^{68} &{}+{}& 3t^{73} &{}+{}& t^{76} &{}+{}& t^{77} &{}+{}& 3t^{86} &{}-{}& t^{140}
\end{array}$$
also differ greatly in the number of terms, but in the opposite direction.  This is in part because $\mathsf L(0) = \{0\}$ for every numerical semigroup $S$, as the lack of a constant term in $\mathcal H_{\mathsf l_S}(S;t)$ adds many erroneous terms in the numerator of Proposition~\ref{p:weightedeuler} that the constant term 1 in $\mathcal H_{\mathsf l_S}(S;t)$ avoids.  Additionally, this example illustrates that examining $S$-invariants via generating functions will sometimes require specialized expressions, rather than a ``one-size-fits-all'' characterization.  
\end{example}

\begin{notation}\label{n:zandzl}
In what follows, we make heavy use of the power series
$$
z(t)
= \prod_{i = 1}^k \frac{1}{1 - t^{n_i}} \qquad \text{and} \qquad \lambda(t)
= \sum_{i = 1}^k \frac{t^{n_i}}{1 - t^{n_i}},
$$
the second of which often occurs in the form $z(t)\lambda(t)$ (for instance, in Theorem~\ref{t:augmentedbigthm}).  The~coefficient of $t^n$ in the series $z(t)$ (usually notated as $\mathcal H_\partial(S;t)$ in the literature) equals the number of factorizations of $n \in S$ (known as the \emph{denumerant} of $n$), while the coefficients of $z(t)\lambda(t)$ are described in Lemma~\ref{l:lensumseries}.  
\end{notation}

\begin{lemma}\label{l:lensumseries}
The power series $z(t)\lambda(t)$ is given by
$$z(t)\lambda(t) = \sum_{n \in S} \ell(n) t^n,$$
where $\ell(n)$ denotes the sum of the lengths of every factorization of $n \in S$.  
\end{lemma}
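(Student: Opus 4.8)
The plan is to compute the coefficient of $t^n$ in the product $z(t)\lambda(t)$ directly by unpacking both factors as sums over factorizations. The key observation is that each factor $t^{n_i}/(1-t^{n_i}) = \sum_{a \ge 1} t^{an_i}$ contributes ``at least one copy of generator $n_i$'', while the remaining factors $1/(1-t^{n_j}) = \sum_{a \ge 0} t^{an_j}$ contribute arbitrary numbers of the other generators. So $z(t)\lambda(t)$ should be rewritten as $\sum_{i=1}^k \bigl(\prod_{j \ne i} \frac{1}{1-t^{n_j}}\bigr)\frac{t^{n_i}}{1-t^{n_i}}$, and each summand expanded as a generating function over factorizations in which the exponent of $n_i$ is at least~$1$.

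The main step is then a counting identity: for a fixed $n \in S$, the coefficient of $t^n$ in $z(t)\lambda(t)$ equals $\sum_{i=1}^k \#\{a \in \mathsf Z_S(n) : a_i \ge 1\}$, and one must recognize this as $\sum_{a \in \mathsf Z_S(n)} (a_1 + \cdots + a_k) = \ell(n)$. The cleanest way to see the last equality is a double-counting / interchange-of-summation argument: $\sum_{i=1}^k \#\{a \in \mathsf Z_S(n) : a_i \ge 1\}$ does not quite give $\ell(n)$ on the nose, since a factorization $a$ with $a_i = 3$ is counted only once (not three times) in the $i$th term. The fix is to instead write $\frac{t^{n_i}}{1-t^{n_i}} = \sum_{a_i \ge 1} t^{a_i n_i}$ and observe that after the full expansion, a factorization $a = (a_1,\dots,a_k)$ of $n$ arises from the $i$th summand in exactly $a_i$ ways — once for each choice of which ``copy'' of $n_i$ came from the distinguished factor $\frac{t^{n_i}}{1-t^{n_i}}$ versus... actually the sharper bookkeeping is: in the $i$th summand the exponent of $t^{n_i}$ ranges over $\{1, 2, 3, \dots\}$ but is a single value, so each $a$ with $a_i \ge 1$ is hit exactly once per $i$. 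Thus the honest computation gives $\sum_i \#\{a : a_i \ge 1\}$, and one rescues $\ell(n)$ by noting $\sum_{i} \#\{a : a_i\ge 1\}$ is \emph{not} $\ell(n)$ — so the expansion must be set up more carefully, writing $\lambda(t) = \sum_i \frac{t^{n_i}}{1-t^{n_i}}$ and using $\frac{1}{(1-t^{n_i})^2} = \sum_{a_i \ge 0}(a_i+1)t^{a_i n_i}$ after combining the distinguished $\frac{t^{n_i}}{1-t^{n_i}}$ with the $\frac{1}{1-t^{n_i}}$ already present in $z(t)$. Then the $i$th summand is $\bigl(\prod_{j\ne i}\sum_{a_j\ge 0}t^{a_jn_j}\bigr)\bigl(\sum_{a_i\ge 1} a_i t^{a_i n_i}\bigr)$, whose coefficient of $t^n$ is $\sum_{a \in \mathsf Z_S(n)} a_i$, and summing over $i$ yields $\sum_{a \in \mathsf Z_S(n)}(a_1+\cdots+a_k) = \ell(n)$.

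Concretely, I would organize the proof as follows. First, record that $z(t) = \sum_{a \in \ZZ_{\ge 0}^k} t^{a_1 n_1 + \cdots + a_k n_k}$ with the coefficient of $t^n$ equal to $|\mathsf Z_S(n)|$ (already noted in Notation~\ref{n:zandzl}). Second, write $z(t)\lambda(t) = \sum_{i=1}^k z(t)\frac{t^{n_i}}{1-t^{n_i}}$ and, for each fixed $i$, combine the $i$th factor of $z(t)$ with $\frac{t^{n_i}}{1-t^{n_i}}$ to get $\frac{t^{n_i}}{(1-t^{n_i})^2} = \sum_{a_i \ge 1} a_i t^{a_i n_i}$ (via $\frac{1}{(1-x)^2} = \sum_{m\ge 0}(m+1)x^m$, reindexed). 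Third, multiply by $\prod_{j \ne i}\frac{1}{1-t^{n_j}} = \prod_{j\ne i}\sum_{a_j \ge 0} t^{a_j n_j}$ and collect: the coefficient of $t^n$ in $z(t)\frac{t^{n_i}}{1-t^{n_i}}$ is $\sum_{a \in \mathsf Z_S(n)} a_i$. Fourth, sum over $i = 1,\dots,k$ and interchange the (finite) sums to get $\sum_{a \in \mathsf Z_S(n)}\sum_{i=1}^k a_i = \sum_{a \in \mathsf Z_S(n)}(a_1 + \cdots + a_k) = \ell(n)$, which is exactly the claim.

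The only genuinely delicate point — and the one I would flag as the main obstacle — is the identity $\frac{1}{1-x}\cdot\frac{x}{1-x} = \frac{x}{(1-x)^2} = \sum_{a \ge 1} a\,x^a$ and the associated reindexing, together with making sure the combining of factors is legitimate at the level of formal power series (it is, since each $1/(1-t^{n_i})$ and $t^{n_i}/(1-t^{n_i})$ lies in $\ZZ[\![t]\!]$ and multiplication there is continuous in the $t$-adic topology, so the rearrangement into a sum over $k$ terms and the expansion of each into a multi-indexed sum are both valid). Everything else is bookkeeping. I would present it compactly, perhaps even in a single displayed computation: $z(t)\lambda(t) = \sum_{i=1}^k \prod_{j=1}^k \tfrac{1}{1-t^{n_j}} \cdot t^{n_i} = \sum_{i=1}^k \sum_{a \in \ZZ_{\ge 0}^k} a_i\, t^{n_i + \sum_j a_j n_j}$ after absorbing — wait, cleaner to absorb first — $= \sum_{i=1}^k \sum_{a \in \ZZ_{\ge 0}^k,\, a_i \ge 1} a_i\, t^{\sum_j a_j n_j} = \sum_{n \in S}\Bigl(\sum_{a \in \mathsf Z_S(n)} (a_1 + \cdots + a_k)\Bigr) t^n = \sum_{n \in S} \ell(n) t^n$.
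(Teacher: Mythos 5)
Your proof is correct and is essentially identical to the paper's: both rewrite $z(t)\lambda(t)$ as $\sum_{i=1}^k \frac{t^{n_i}}{(1-t^{n_i})^2}\prod_{j\ne i}\frac{1}{1-t^{n_j}}$, expand $\frac{t^{n_i}}{(1-t^{n_i})^2}=\sum_{a_i}a_i t^{a_i n_i}$, and read off the coefficient of $t^n$ in the $i$th summand as $\sum_{a\in\mathsf Z_S(n)}a_i$ before summing over $i$. The only suggestion is to cut the exploratory back-and-forth (and the garbled first display in your final sentence, which drops a factor of $\frac{1}{1-t^{n_i}}$ before you correct it) and present just the clean four-step version.
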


\begin{proof}
Rewrite $z(t)\lambda(t)$ as 
$$
z(t)\lambda(t)
= \displaystyle\sum_{i = 1}^k \frac{t^{n_i}}{(1 - t^{n_i})^2} \bigg( \prod_{j \ne i} \frac{1}{1 - t^{n_j}} \bigg)
= \displaystyle \sum_{i = 1}^k \bigg( \sum_{a_i \ge 0} a_it^{a_in_i} \bigg) \bigg( \prod_{j \ne i} \bigg( \sum_{a_j \ge 0} t^{a_jn_j} \bigg) \bigg).
$$
In the final expression above, when expanding the product inside the outermost sum, the term $t^n$ appears once for each factorization of $n$ in $S$, with coefficient equal to the number of copies of $n_i$ appearing in that factorization.  As such, 
$$
z(t)\lambda(t)
= \sum_{i = 1}^k \sum_{n \in S} \sum_{a \in \mathsf Z(n)} a_it^n\
= \sum_{n \in S} \sum_{a \in \mathsf Z(n)} |a| t^n = \sum_{n \in S} \ell(n)t^n,
$$
as desired.  
\end{proof}

\begin{thm}\label{t:augmentedbigthm}
Fix an $S$-invariant $f$.  
The~augmented Hilbert series of $f$ is given by
$$\begin{array}{r@{}c@{}l}
\displaystyle \mathcal H_f(S;t)
&{}={}& \displaystyle z(t)\lambda(t) \sum_{n \in S} \chi(\Delta_n) t^n + z(t) \sum_{n \in S} \widehat \chi_f(\Delta_n) t^n \\
&{}={}& \displaystyle \lambda(t) \mathcal H(S;t) + \frac{\sum_{n \in S} \widehat \chi_f(\Delta_n) t^n}{(1 - t^{n_1}) \cdots (1 - t^{n_k})}.\end{array}$$
\end{thm}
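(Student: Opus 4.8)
The two displayed expressions for $\mathcal H_f(S;t)$ differ only by the substitution $\mathcal H(S;t) = z(t)\sum_{n\in S}\chi(\Delta_n)t^n$ from Theorem~\ref{t:bigthm} (together with $z(t) = 1/\prod_i(1-t^{n_i})$), so it suffices to establish the single identity
$$\mathcal H_f(S;t) = \lambda(t)\,\mathcal H(S;t) + z(t)\sum_{n\in S}\widehat\chi_f(\Delta_n)t^n$$
as an equality of formal power series in $\ZZ[\![t]\!]$, where every $1-t^{n_i}$ is a unit and, each $\Delta_n$ being finite, the series $\sum_n\widehat\chi_f(\Delta_n)t^n$ is well defined. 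The plan is to compute $\sum_{n\in S}\widehat\chi_f(\Delta_n)t^n$ directly and match it against the right-hand side.

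Unwinding Definition~\ref{d:weightedeuler}, interchanging the sum over $n\in S$ with the sum over faces $F\subseteq[k]$ satisfying $n - n_F\in S$, and substituting $m = n - n_F$ exactly as in the proof of Proposition~\ref{p:weightedeuler}, I get
$$\sum_{n\in S}\widehat\chi_f(\Delta_n)t^n = \Big(\sum_{F\subseteq[k]}(-1)^{|F|}t^{n_F}\Big)\mathcal H_f(S;t) + \Big(\sum_{F\subseteq[k]}(-1)^{|F|}|F|\,t^{n_F}\Big)\mathcal H(S;t).$$
The first bracket is the fully expanded product $\prod_{i=1}^k(1-t^{n_i}) = z(t)^{-1}$. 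For the second bracket, I would write $|F| = \sum_{i\in F}1$, swap the order of summation to get $\sum_{i=1}^k\sum_{F\ni i}(-1)^{|F|}t^{n_F}$, and for each $i$ split off the vertex $i$ (writing $F = \{i\}\sqcup G$) to obtain $\sum_{F\ni i}(-1)^{|F|}t^{n_F} = -t^{n_i}\prod_{j\ne i}(1-t^{n_j})$; summing over $i$ and reinserting the missing factor $1-t^{n_i}$ turns this into $-\lambda(t)\prod_{i=1}^k(1-t^{n_i}) = -\lambda(t)z(t)^{-1}$. Substituting both evaluations and multiplying the whole identity by $z(t)$ yields $z(t)\sum_n\widehat\chi_f(\Delta_n)t^n = \mathcal H_f(S;t) - \lambda(t)\mathcal H(S;t)$, which rearranges to the displayed identity above; feeding Theorem~\ref{t:bigthm} back into the term $\lambda(t)\mathcal H(S;t)$ then produces the first form of the statement.

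The computation is essentially bookkeeping, so I do not anticipate a genuine obstacle; the one step requiring a small idea is the evaluation of the signed, $|F|$-weighted sum $\sum_F(-1)^{|F|}|F|\,t^{n_F}$, which one can also recognize as $\tfrac{d}{ds}\big|_{s=1}\prod_i(1-s\,t^{n_i})$ and hence as $-\lambda(t)\prod_i(1-t^{n_i})$. This is precisely the term that makes the summand $z(t)\lambda(t)\sum_n\chi(\Delta_n)t^n$ appear, and it explains the extra weight ``$+|F|$'' built into the augmented Euler characteristic in Definition~\ref{d:weightedeuler}. (Alternatively, one can sidestep recomputing this sum by invoking Proposition~\ref{p:weightedeuler} applied to $\chi_f$ and using the relation $\widehat\chi_f(\Delta_n) = \chi_f(\Delta_n) + \sum_{F\in\Delta_n}(-1)^{|F|}|F|$ read directly off the definitions, at which point only the generating-function identity $\sum_n\big(\sum_{F\in\Delta_n}(-1)^{|F|}|F|\big)t^n = -\lambda(t)z(t)^{-1}\mathcal H(S;t)$ remains.)
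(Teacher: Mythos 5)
Your proposal is correct and follows essentially the same route as the paper's proof: both reduce the claim, via Proposition~\ref{p:weightedeuler}, to the cancellation of the $|F|$-weighted correction term against $\lambda(t)$ times the numerator of $\mathcal H(S;t)$, which is exactly your identity $\sum_{F\subseteq[k]}(-1)^{|F|}|F|\,t^{n_F} = -\lambda(t)\prod_{i=1}^k(1-t^{n_i})$. Your packaging of that identity as $\tfrac{d}{ds}\big|_{s=1}\prod_i(1-s\,t^{n_i})$ is a tidy reformulation, but not a different argument.
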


\begin{proof}
Multiplying both sides by the denominator of $z(t)$, Proposition~\ref{p:weightedeuler} implies
$$
\bigg( \prod_{i = 1}^k (1 - t^{n_i}) \bigg) \sum_{n \in S} f(n) t^n
= \sum_{m \in S} \sum_{F \in \Delta_m} (-1)^{|F|} f(m - n_F) t^m.
$$
In the second term on the right hand side of the claimed equality, we have
$$\begin{array}{r@{}c@{}l}
\displaystyle \sum_{m \in S} \widehat \chi_f(\Delta_m) t^m
&{}={}& \displaystyle \sum_{m \in S} \sum_{F \in \Delta_m} (-1)^{|F|} (f(m - n_F) + |F|) t^m \\
&{}={}& \displaystyle \sum_{m \in S} \sum_{F \in \Delta_m} (-1)^{|F|} f(m - n_F) t^m + \sum_{m \in S} \sum_{F \in \Delta_m} (-1)^{|F|} |F| t^m,
\end{array}$$
so it suffices to show that 
$$\bigg( \sum_{i = 1}^k \frac{t^{n_i}}{1 - t^{n_i}} \bigg) \sum_{m \in S} \sum_{G \in \Delta_m} (-1)^{|G|} t^m + \sum_{m \in S} \sum_{F \in \Delta_m} (-1)^{|F|} |F| t^m = 0.$$
Indeed, multiplying the first part by $\prod_{j = 1}^k (1 - t^{n_j})$ yields
$$\begin{array}{r@{}c@{}l}
\displaystyle \sum_{i = 1}^k t^{n_i} \bigg( \prod_{j \ne i}^k (1 - t^{n_j}) \bigg) \sum_{m \in S} \sum_{G \in \Delta_m} (-1)^{|G|} t^{m}
&{}={}& \displaystyle \sum_{m \in S} \sum_{i = 1}^k \sum_{\substack{A \subseteq [k] \\ i \in A}} (-1)^{|A|-1} \sum_{G \in \Delta_m} (-1)^{|G|} t^{m+n_A} \\
&{}={}& \displaystyle \sum_{m \in S} \sum_{A \subseteq [k]} (-1)^{|A|-1} |A| \sum_{G \in \Delta_m} (-1)^{|G|} t^{m+n_A} \\
&{}={}& \displaystyle \sum_{m \in S} \sum_{F \in \Delta_m} (-1)^{|F|-1} |F| \sum_{G \in \Delta_m} (-1)^{|G|} t^{m} \\
\end{array}$$
and multiplying the second part by the same factor yields
$$\begin{array}{r@{}c@{}l}
\displaystyle \bigg( \prod_{j = 1}^k (1 - t^{n_j}) \bigg) \sum_{m \in S} \sum_{F \in \Delta_m} (-1)^{|F|} |F| t^{m}
&{}={}& \displaystyle \sum_{m \in S} \sum_{A \subseteq [k]} (-1)^{|A|} \sum_{F \in \Delta_m} (-1)^{|F|} t^{m+n_A} \\
&{}={}& \displaystyle \sum_{m \in S} \sum_{G \in \Delta_m} (-1)^{|G|} \sum_{F \in \Delta_m} (-1)^{|F|} t^{m} \\
\end{array}$$
which completes the proof.  
\end{proof}

\section{The dissonance point}
\label{sec:dissonance}

The numerator of each rational expression in Proposition~\ref{p:weightedeuler} and Theorem~\ref{t:augmentedbigthm} has finite degree when $f$ is any $S$-invariant listed in Theorem~\ref{t:invariantquasi}.  This follows from Theorem~\ref{t:invariantquasi} and general facts from the theory of generating function \cite{ec}, but we prove this fact in Proposition~\ref{p:finitedegree} using weighted and augmented Euler characteristics, as a demonstration of their utility.  

The other main result of this section is Theorem~\ref{t:dissonance}, which demonstrates that when $f = \mathsf M_S$ or $f = \mathsf m_S$, we can recover from the degree of $\sum_{n \in S} \widehat \chi_f(\Delta_n) t^n$ the minimum integer input after which $f$ becomes truly quasipolynomial.  Note that by Proposition~\ref{p:weightedeuler}, this fact is immediate if the coefficients $\chi_f(\Delta_n)$ are used in place of $\widehat \chi_f(\Delta_n)$ for \textbf{any} eventually quasipolynomial function $f$.  

\begin{defn}\label{d:dissonance}
Fix an $S$-invariant $f$ that agrees with a quasipolynomial function $g:\ZZ \to \RR$ for sufficiently large input values.  The \emph{dissonance point of $f$} is the largest integer $n \ge \mathsf F(S)$ such that $f(n) \ne g(n)$.  We say the semigroup $S$ is \emph{$f$-harmonic} if $f(n) = g(n)$ for every $n \in S$.  
\end{defn}

\begin{example}\label{e:dissonance}
Let $S = \<9,10,23\>$ from Example~\ref{e:chivschihat}.  The dissonance point of $\mathsf M_S$ is~$71$, since $\mathsf Z(71) = \{(2,3,1)\}$ but $\mathsf Z(80) = \{(3,3,1), (0,8,0)\}$, so 
$$8 = \mathsf M_S(80) > \mathsf M_S(71) + 1 = 7.$$
In particular, the longest factorization of $80$ does not have any copies of the first generator.  Generally, longer factorizations will involve more small generators than large generators, but even though $(3,3,1)$ has more copies of the smallest generator, it has enough larger generators to afford $(0,8,0)$ higher efficiency.  This is exacerbated by the fact that $9$ and $10$ are close together, while $23$ is significantly larger than both.  

On the other hand, $S = \<6, 9, 20\>$ is $\mathsf M_S$-harmonic, since 
$$\mathsf M_S(n + 6) = \mathsf M_S(n) + 1$$
for every $n \in S$ by Theorem~\ref{t:invariantquasi} and exhaustive computation for small $n$ using, for instance, the \texttt{GAP} package \texttt{numericalsgps} \cite{numericalsgpsgap}.  
\end{example}

\begin{prop}\label{p:finitedegree}
If $f$ is one of the $S$-invariants appearing in Theorem~\ref{t:invariantquasi}, then $\sum_{n \ge 0} \chi_f(\Delta_n) t^n$ and $\sum_{n \ge 0} \widehat \chi_f(\Delta_n) t^n$ have finitely many terms.  
\end{prop}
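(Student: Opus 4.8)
The plan is to show that for each invariant $f \in \{\mathsf M_S, \mathsf m_S, \mathsf l_S\}$ the coefficients $\chi_f(\Delta_n)$ and $\widehat\chi_f(\Delta_n)$ both vanish once $n$ is large, exploiting the fact that $\Delta_n$ stabilizes to the full simplex on $[k]$ for $n \gg 0$, which turns the defining alternating sums into binomial-type sums that telescope against the recurrences in Theorem~\ref{t:invariantquasi}. (We assume $k \ge 2$; this excludes only $S = \ZZ_{\ge 0}$.) Concretely, writing $\sigma = n_1 + \cdots + n_k$, for every $n > F(S) + \sigma$ and every $F \subseteq [k]$ one has $n - n_F \ge n - \sigma > F(S)$, hence $n - n_F \in S$ and therefore $\Delta_n = 2^{[k]}$. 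For such $n$,
$$\widehat\chi_f(\Delta_n) = \sum_{F \subseteq [k]}(-1)^{|F|} f(n - n_F) + \sum_{F \subseteq [k]}(-1)^{|F|}|F| = \chi_f(\Delta_n) + \sum_{F \subseteq [k]}(-1)^{|F|}|F|,$$
and differentiating $\sum_{F \subseteq [k]}(-1)^{|F|}x^{|F|} = (1-x)^k$ and setting $x = 1$ shows $\sum_F (-1)^{|F|}|F| = -k(1-1)^{k-1} = 0$ since $k \ge 2$ (and likewise $\chi(\Delta_n) = (1-1)^k = 0$). So $\widehat\chi_f(\Delta_n) = \chi_f(\Delta_n)$ for $n > F(S) + \sigma$, and it suffices to prove $\chi_f(\Delta_n) = 0$ for all sufficiently large $n$.

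For $f = \mathsf M_S$, splitting the subsets of $[k]$ according to membership of the vertex $1$ gives
$$\chi_{\mathsf M_S}(\Delta_n) = \sum_{G \subseteq \{2,\dots,k\}}(-1)^{|G|}\big(\mathsf M_S(n - n_G) - \mathsf M_S(n - n_G - n_1)\big).$$
By Theorem~\ref{t:invariantquasi} fix $N_0$ with $\mathsf M_S(m + n_1) = \mathsf M_S(m) + 1$ for all $m \ge N_0$; once $n \ge N_0 + \sigma$, each $m = n - n_G - n_1$ above satisfies $m \ge N_0$, so every parenthesized difference equals $1$ and $\chi_{\mathsf M_S}(\Delta_n) = \sum_{G \subseteq \{2,\dots,k\}}(-1)^{|G|} = (1-1)^{k-1} = 0$. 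The case $f = \mathsf m_S$ is identical, splitting on membership of vertex $k$ and using $\mathsf m_S(m + n_k) = \mathsf m_S(m) + 1$.

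For $f = \mathsf l_S$, the telescoping does not apply directly since the only available single-variable recurrence advances by $n_1 n_k$, not by a generator; instead I would invoke the refinement of Theorem~\ref{t:invariantquasi} that length sets of large elements of a numerical semigroup are full arithmetic progressions of common difference $d$ — there is $N_1$ with $\mathsf L_S(n) = \{\mathsf m_S(n), \mathsf m_S(n) + d, \dots, \mathsf M_S(n)\}$ for all $n \ge N_1$, hence $\mathsf l_S(n) = \tfrac1d\big(\mathsf M_S(n) - \mathsf m_S(n)\big) + 1$ there (see e.g.\ \cite{elastsets,factorhilbert}). Since $f \mapsto \chi_f(\Delta_n)$ is linear and $n - n_F \in S$ with $n - n_F \ge n - \sigma$ for each $F \in \Delta_n$, for $n \ge N_1 + \sigma$ we get
$$\chi_{\mathsf l_S}(\Delta_n) = \tfrac1d\,\chi_{\mathsf M_S}(\Delta_n) - \tfrac1d\,\chi_{\mathsf m_S}(\Delta_n) + \chi(\Delta_n),$$
the last term arising from the constant $+1$. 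All three summands vanish for $n \gg 0$ by the previous paragraph, so $\chi_{\mathsf l_S}(\Delta_n) = 0$; in every case $\chi_f(\Delta_n) = \widehat\chi_f(\Delta_n) = 0$ past an explicit bound, so both generating functions are polynomials.

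The step I expect to be the main obstacle is exactly this last one: unlike $\mathsf M_S$ and $\mathsf m_S$, the invariant $\mathsf l_S$ does not carry a one-generator recurrence, so one seems to genuinely need the arithmetic-progression structure of large length sets (equivalently the identity $\mathsf l_S = \tfrac1d(\mathsf M_S - \mathsf m_S) + 1$ for large input) to reduce it to the other two invariants. A minor but necessary point is the hypothesis $k \ge 2$: for $S = \ZZ_{\ge 0}$ one has $\mathsf M_S(n) = \mathsf m_S(n) = n$, whence $\chi_{\mathsf M_S}(\Delta_n) = \mathsf M_S(n) - \mathsf M_S(n-1) = 1$ for all $n \ge 1$ and the statement fails, so this (implicit) assumption cannot be dropped.
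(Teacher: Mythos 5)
Your argument for $\mathsf M_S$ and $\mathsf m_S$ is correct and is essentially the paper's own proof: split the alternating sum over subsets of $[k]$ according to membership of the vertex $1$ (resp.\ $k$), telescope using $\mathsf M_S(m+n_1)=\mathsf M_S(m)+1$ (resp.\ $\mathsf m_S(m+n_k)=\mathsf m_S(m)+1$), and note that $\sum_{F\subseteq[k]}(-1)^{|F|}|F|=0$ to pass between $\chi_f$ and $\widehat\chi_f$. (Your caveat about $k\ge 2$ is fair; the paper's proof makes the same implicit assumption.)

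The $\mathsf l_S$ case, however, rests on a false lemma. Length sets of large elements of a numerical semigroup are \emph{not} in general full arithmetic progressions of difference $d$, so the identity $\mathsf l_S(n)=\tfrac1d(\mathsf M_S(n)-\mathsf m_S(n))+1$ need not hold on any tail of $S$. Take $S=\langle 10,11,19\rangle$, where $d=\gcd(1,8)=1$. Any factorization $(a,b,c')$ of $n=19c$ has length $c+\tfrac1{19}(9a+8b)$, so a factorization of length $c+j$ exists precisely when $9a+8b=19j$ has a nonnegative solution with $c'=c+j-a-b\ge 0$; for $j=1,2$ there is no nonnegative solution at all, while $j=0$ and $j=3$ occur via $(a,b)=(0,0)$ and $(1,6)$. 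Hence for every $c\ge 4$ the length set of $19c$ contains $c$ and $c+3$ but omits $c+1$ and $c+2$, so the identity you invoke fails along an infinite arithmetic progression of inputs. This is exactly why the paper's proof uses the weaker, correct statement that $\mathsf l_S(n)=\tfrac1d(\mathsf M_S(n)-\mathsf m_S(n))-l_0(n)$ for large $n$ with $l_0$ merely an $n_1n_k$-periodic function, and then argues via Proposition~\ref{p:weightedeuler} that each of the three resulting generating functions is rational with denominator dividing $\prod_{i=1}^k(1-t^{n_i})$. Your linearity reduction of $\chi_{\mathsf l_S}$ to $\chi_{\mathsf M_S}$, $\chi_{\mathsf m_S}$, and $\chi(\Delta_n)$ would survive only if you could additionally show $\chi_{l_0}(\Delta_n)=0$ for large $n$, and that does not follow from the single-generator telescoping you use elsewhere, since the period $n_1n_k$ is not a generator. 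The $\mathsf l_S$ case therefore needs to be repaired.
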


\begin{proof}
We must show 
$$\chi_f(\Delta_n) = \widehat \chi_f(\Delta_n) = 0$$
for all sufficiently large $n$.  If $f = \mathsf m_S$, then $f$ satisfies $f(n + n_k) = f(n) + 1$ for sufficiently large $n$, so provided that $n > \mathsf F(S) + n_{[k]}$ also holds, we have
$$\begin{array}{r@{}c@{}l}
\displaystyle \chi_f(\Delta_n)
&{}={}& \displaystyle \sum_{F \subseteq [k]} (-1)^{|F|} f(n - n_F)
= \displaystyle \sum_{F \subseteq [k-1]} (-1)^{|F|} f(n - n_F) + \sum_{F \subseteq [k-1]} (-1)^{|F|+1} f(n - n_F - n_k) \\
&{}={}& \displaystyle \sum_{F \subseteq [k-1]} (-1)^{|F|} (f(n - n_F) - f(n - n_F - n_k))
= \displaystyle \sum_{F \subseteq [k-1]} (-1)^{|F|} = 0.
\end{array}$$
Additionally, 
$$\widehat \chi_f(\Delta_n) - \chi_f(\Delta_n) = \sum_{F \subseteq [k]} (-1)^{|F|} |F| = 0,$$
which proves $\widehat \chi_f(\Delta_n) = 0$.  Replacing $n_k$ with $n_1$ throughout the above argument proves the same equalities hold for $f = \mathsf M_S$, leaving only the case $f = \mathsf l_S$.  By~Theorem~\ref{t:invariantquasi}, we have $f(n) = \frac{1}{d}(\mathsf M(n) - \mathsf m(n)) - l_0(n)$ for large~$n$, where $l_0$ is some $n_1n_k$-periodic function.  As such,
$$\sum_{n \in S} f(n) t^n = \frac{1}{d} \bigg( \sum_{n \in S} \mathsf M(n) t^n - \sum_{n \in S} \mathsf m(n) t^n \bigg) - \sum_{n \in S} l_0(n) t^n,$$
and by Proposition~\ref{p:weightedeuler}, each power series on the right hand side is rational with denominator dividing $\prod_{i = 1}^k (1 - t^{n_i})$.  This proves $\chi_f(\Delta_n) = 0$ for large $n$.  Just as above, $\widehat \chi_f(\Delta_n) = 0$ then readily follows for large $n$, so the proof is complete.  
\end{proof}

\begin{thm}\label{t:dissonance}
If $f = \mathsf M_S$ or $f = \mathsf m_S$, then the dissonance point of $f$ is $d - n_{[k]}$, where 
$$d = \deg\bigg(\sum_{n \ge 0} \widehat \chi_f(\Delta_n) t^n\bigg).$$
\end{thm}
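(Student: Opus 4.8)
I would reduce to $f = \mathsf M_S$; the case $f = \mathsf m_S$ is identical after reversing the generating set, which interchanges the roles of $n_1$ and $n_k$, and I assume $S \ne \ZZ_{\ge 0}$ so that $k \ge 2$. Write $N_f(t) = \sum_{n \ge 0} \widehat\chi_f(\Delta_n) t^n$, which is a polynomial by Proposition~\ref{p:finitedegree}, and let $g$ be the quasipolynomial that $\mathsf M_S$ eventually agrees with, as in Definition~\ref{d:dissonance}. Since two quasipolynomials agreeing for all large inputs agree identically, Theorem~\ref{t:invariantquasi} yields $g(n + n_1) = g(n) + 1$ for \emph{every} $n \in \ZZ$. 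The key auxiliary object is the defect $\delta(n) = \mathsf M_S(n) - \mathsf M_S(n - n_1) - 1$, and the plan is to locate the top nonzero coefficient of $N_f(t)$ using its two relevant properties: (a)~$\delta(n) = 0$ whenever $n > D + n_1$, and (b)~$\delta(D + n_1) \ne 0$. Both follow at once from $\mathsf M_S(m) = g(m)$ for $m > D$, from $\mathsf M_S(D) \ne g(D)$, and from the recursion for $g$: if $n > D + n_1$ then both arguments of $\delta$ exceed $D$ and $\delta(n) = g(n) - g(n - n_1) - 1 = 0$, while $\delta(D + n_1) = g(D) - \mathsf M_S(D)$ (noting that if $D \notin S$ then $D = F(S)$, so $\mathsf M_S(D) = 0 \ne g(D)$).

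To get $\deg N_f(t) \le D + n_{[k]}$, I would fix $n > D + n_{[k]}$. Then $n - n_{[k]} > D \ge F(S)$, so $n - n_{[k]} \in S$ and $\Delta_n$ is the full $(k-1)$-simplex; hence $\widehat\chi_f(\Delta_n) = \chi_f(\Delta_n)$ (cf.\ the proof of Proposition~\ref{p:finitedegree}). Grouping $\chi_f(\Delta_n) = \sum_{F \subseteq [k]} (-1)^{|F|} \mathsf M_S(n - n_F)$ according to whether the index $1$ lies in $F$, and using $\sum_{F' \subseteq \{2, \ldots, k\}} (-1)^{|F'|} = 0$ (valid since $k \ge 2$), collapses this to $\sum_{F' \subseteq \{2, \ldots, k\}} (-1)^{|F'|} \delta(n - n_{F'})$. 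Since $n_{F'} \le n_{[k]} - n_1$ for every such $F'$, each argument satisfies $n - n_{F'} \ge n - n_{[k]} + n_1 > D + n_1$, so property~(a) makes every term vanish and $\widehat\chi_f(\Delta_n) = 0$.

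The matching lower bound amounts to showing the coefficient of $t^{D + n_{[k]}}$ in $N_f(t)$ is nonzero; put $n^* = D + n_{[k]}$. If $D \in S$, then $n^* - n_{[k]} = D \in S$, so $\Delta_{n^*}$ is again the full simplex and the collapse above applies at $n^*$; every \emph{proper} $F' \subsetneq \{2, \ldots, k\}$ has $n_{F'} < n_{[k]} - n_1$, so $n^* - n_{F'} > D + n_1$ and its term vanishes by~(a), leaving $\widehat\chi_f(\Delta_{n^*}) = (-1)^{k-1}\delta(D + n_1) \ne 0$ by~(b). If $D \notin S$, then $D = F(S)$ (the only integer $\ge F(S)$ outside $S$), so $\Delta_{n^*}$ is instead the \emph{boundary} of the simplex, i.e.\ all subsets of $[k]$ but $[k]$ itself. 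The same reindexing still gives $\sum_{F \subseteq [k]} (-1)^{|F|} \mathsf M_S(n^* - n_F) = (-1)^{k-1} \delta(D + n_1)$ (the term $F = [k]$ being $\mathsf M_S(F(S)) = 0$), while the $|F|$-weighted part now contributes $\sum_{F \subsetneq [k]} (-1)^{|F|}|F| = (-1)^{k-1} k$; adding these, $\widehat\chi_f(\Delta_{n^*}) = (-1)^{k-1}\bigl(\delta(D + n_1) + k\bigr) = (-1)^{k-1}\bigl(g(F(S)) + k\bigr)$, which is nonzero since $g(F(S)) = \mathsf M_S(F(S) + n_1) - 1 \ge 0$ (using $F(S) + n_1 > D$ and $F(S)+n_1 \in S \setminus \{0\}$) together with $k \ge 2$.

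I expect the genuinely delicate point to be this last case $D = F(S)$, the ``nearly harmonic'' one: since $\Delta_{n^*}$ is then not the full simplex, the identity $\sum_{F \in \Delta_{n^*}} (-1)^{|F|}|F| = 0$ that makes $\widehat\chi_f$ and $\chi_f$ coincide breaks down, so one must retain the boundary-sphere correction $(-1)^{k-1}k$ and verify it cannot cancel $(-1)^{k-1}\delta(D + n_1)$ — this interaction is exactly what makes the \emph{augmented} characteristic $\widehat\chi_f$ (rather than $\chi_f$) produce a numerator of sharp degree. The only other wrinkle is the degenerate case in which $S$ is harmonic with $g(F(S)) = 0$: there no dissonance point exists and the statement is vacuous, so I would either exclude it or fold it into the standing hypothesis that $D$ is defined.
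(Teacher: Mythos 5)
Your proof is correct and follows essentially the same route as the paper's: evaluate $\widehat\chi_f$ at $D + n_{[k]}$, splitting on whether $D = F(S)$ (so $\Delta_{D+n_{[k]}}$ is the boundary of the simplex and the $(-1)^{k-1}k$ correction survives) or $D \in S$ (full simplex), and in each case collapse the alternating sum to the single defective difference $\delta(D+n_1) = g(D) - \mathsf M_S(D) \ne 0$. If anything your write-up is more complete than the paper's, which leaves the sharp upper bound $\widehat\chi_f(\Delta_n) = 0$ for all $n > D + n_{[k]}$ to an implicit appeal to Proposition~\ref{p:finitedegree}; your explicit verification of that half via the defect $\delta$, and your flagging of the degenerate harmonic case in which $D$ is undefined, are both worth keeping.
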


\begin{proof}
Suppose $f = \mathsf m_S$, and let $m \in S$ denote the largest element of $S$ such that $\mathsf m(m - n_k) + 1 \ne \mathsf m(m)$.  Clearly $m \le d - n_{[k]}$, since each $n > m + n_{[k]}$ must have $\widehat \chi_f(\Delta_n) = 0$ by the proof of Proposition~\ref{p:finitedegree}.  Moreover,
$$
\widehat \chi_f(\Delta_d)
= \sum_{F \subseteq [k]} (-1)^{|F|} \big( f(d - n_F) + |F| \big)
= (-1)^k \big(1 + f(d - n_{[k]}) - f(d - n_{[k-1]}) \big)
$$
is nonzero, proving the claim when $f = \mathsf m_S$.  The case $f = \mathsf M_S$ follows analogously.  
\end{proof}

\section{Augmented Hilbert series of gluings}
\label{sec:gluings}

Gluing (Definition~\ref{d:gluing}) is a method of combining two numerical semigroups $S_1$ and~$S_2$ to obtain a numerical semigroup $S = d_1S_1 + d_2S_2$ whose factorization structure can be expressed explicitly in terms of the factorizations of $S_1$ and $S_2$ \cite{numerical}.  Several families of numerical semigroups of interest in the literature (e.g. complete intersection, supersymmetric, telescopic) are described in terms of gluings.  Moreover, the Hilbert series of $S$ can be concisely expressed as
$$\mathcal H(S;t) = (1 - t^{d_1d_2})\mathcal H(S_1;t^{d_1})\mathcal H(S_2;t^{d_2})$$
in terms of the Hilbert series of $S_1$ and $S_2$ (see~\cite{cyclotomicnumerical}).  

One might hope that a similar relation can be obtained for augmented Hilbert series, but unfortunately, this is not the case.  In fact, even gluing two harmonic numerical semigroups need not yield a harmonic numerical semigroup; see Example~\ref{e:nonharmonicgluing}.  However, if the gluing  is sufficiently well-behaved (see Definition~\ref{d:harmonicgluing}), then an expression for the augmented Hilbert series of $S$ can be obtained (Theorem~\ref{t:harmonicgluing}).  

\begin{remark}\label{r:maxmingluing}
All results and definitions in this section are stated in terms of the maximum factorization length $S$-invariant $\mathsf M_S$, but analogous results (with analogous proofs) also hold for the minimum factorization length $S$-invariant $\mathsf m_S$.  
\end{remark}

\begin{defn}\label{d:gluing}
Fix numerical semigroups $S_1$ and $S_2$, and elements $d_1 \in S_2$ and $d_2 \in S_1$ that are not minimal generators of their respective semigroups.  We say $S = d_1S_1 + d_2S_2$ is a \emph{gluing of $S_1$ and $S_2$} if $\gcd(d_1,d_2) = 1$.  
\end{defn}

\begin{example}\label{e:nonharmonicgluing}
Let $S_1 = \<6, 10, 15\>$ and $S_2 = \<5, 7\>$, and let 
$$S = 23S_1 + 27S_2 = \<138, 230, 345, 135, 162\>.$$
Both $S_1$ and $S_2$ are $\mathsf M_S$-harmonic (and supersymmetric, one of the most well-behaved families of numerical semigroups under gluing), but the glued numerical semigroup $S$ fails to satisfy $\mathsf M_S(n + n_1) = \mathsf M_S(n) + 1$ for each $n$ in the set
$$\left\{
\begin{array}{@{\,}r@{\,}r@{\,}r@{\,}r@{\,}r@{\,}r@{\,}r@{\,}r@{\,}r@{\,}r@{\,}}
 831, &  969, &  993, & 1061, & 1131, & 1155, & 1199, & 1223, & 1291, & 1293, \\
1317, & 1361, & 1385, & 1429, & 1453, & 1455, & 1479, & 1523, & 1547, & 1591, \\
1615, & 1617, & 1685, & 1709, & 1753, & 1777, & 1847, & 1915, & 1939, & 2077
\end{array}\right\}$$
(this can be verified using the \texttt{GAP} package \texttt{numericalsgps} \cite{numericalsgpsgap}).  
The primary issue is that the images of the smallest generators of $S_1$ and $S_2$ are relatively close in $S$, a~property that was observed by the second author when writing~\cite{elastsets} to correlate with a large dissonance point for maximum factorization length.  
\end{example}

\begin{defn}\label{d:harmonicgluing}
Resume notation from Definition~\ref{d:gluing}.  We say $S$ is a \emph{$\mathsf M_S$-harmonic gluing} if every $n \in S$ satifies $\mathsf M_S(n) = \mathsf M_{S_1}(n') + \mathsf M_{S_2}(n'')$, where $n = d_1n' + d_2n''$ for $n' \in S_1$, $n'' \in S_2$, and $n'$ maximal among all such expressions.  Note that this property is dependent on the order of $S_1$ and $S_2$.  We define an \emph{$\mathsf m_S$-harmonic gluing} analogously, where the expression $n = d_1n' + d_2n''$ is chosen so that $n''$ is maximal.  
\end{defn}





%

\begin{thm}\label{t:harmonicgluing}
If $S = d_1S_1 + d_2S_2$ is an $\mathsf M_S$-harmonic gluing, then
$$\mathcal H_{\mathsf M_S}(S;t) = \mathcal H(S_1;t^{d_1}) \bigg( \sum_{n \in A_2} \mathsf M_{S_2}(n) (t^{d_2})^n \bigg) + \mathcal H_{\mathsf M_{S_1}}(S_1;t^{d_1}) \bigg( \sum_{n \in A_2} (t^{d_2})^n \bigg),$$
where $A_2 = \Ap(S_2;d_1)$, and if $S$ is an $\mathsf m_S$-harmonic gluing, then
$$\mathcal H_{\mathsf m_S}(S;t) = \bigg( \sum_{n \in A_1} \mathsf m_{S_1}(n) (t^{d_1})^n \bigg) \mathcal H(S_2;t^{d_2}) + \bigg( \sum_{n \in A_1} (t^{d_1})^n \bigg) \mathcal H_{\mathsf m_{S_2}}(S_2;t^{d_2}),$$
where $A_1 = \Ap(S_1;d_2)$
\end{thm}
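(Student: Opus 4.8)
The plan is to combine the known multiplicativity of ordinary Hilbert series under gluing with the additivity of $\mathsf M_S$ on an $\mathsf M_S$-harmonic gluing, after first decomposing each element of $S$ canonically along the gluing. Recall (from \cite{numerical}, as cited in Section~\ref{sec:gluings}) that every $n \in S = d_1 S_1 + d_2 S_2$ can be written as $n = d_1 n' + d_2 n''$ with $n' \in S_1$ and $n'' \in S_2$, and that if we further require $n'' \in \Ap(S_2; d_1) = A_2$, then this expression is \emph{unique}; equivalently, one may take $n'$ maximal among all such expressions. This is precisely the normalization used in Definition~\ref{d:harmonicgluing}, so for an $\mathsf M_S$-harmonic gluing we get the clean identity $\mathsf M_S(n) = \mathsf M_{S_1}(n') + \mathsf M_{S_2}(n'')$ for the unique pair $(n', n'')$ with $n'' \in A_2$.

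First I would write, using this unique decomposition,
$$\mathcal H_{\mathsf M_S}(S;t) = \sum_{n \in S} \mathsf M_S(n) t^n = \sum_{n' \in S_1} \sum_{n'' \in A_2} \big( \mathsf M_{S_1}(n') + \mathsf M_{S_2}(n'') \big) t^{d_1 n' + d_2 n''},$$
which splits as
$$\bigg( \sum_{n' \in S_1} \mathsf M_{S_1}(n')(t^{d_1})^{n'} \bigg)\bigg( \sum_{n'' \in A_2} (t^{d_2})^{n''} \bigg) + \bigg( \sum_{n' \in S_1} (t^{d_1})^{n'} \bigg)\bigg( \sum_{n'' \in A_2} \mathsf M_{S_2}(n'') (t^{d_2})^{n''} \bigg).$$
Recognizing $\sum_{n' \in S_1} \mathsf M_{S_1}(n')(t^{d_1})^{n'} = \mathcal H_{\mathsf M_{S_1}}(S_1; t^{d_1})$ and $\sum_{n' \in S_1}(t^{d_1})^{n'} = \mathcal H(S_1; t^{d_1})$ then gives exactly the claimed formula. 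The $\mathsf m_S$ case is identical after swapping the roles of $S_1$ and $S_2$ (so the normalization requires $n' \in \Ap(S_1; d_2) = A_1$ and $n''$ maximal), matching Remark~\ref{r:maxmingluing}.

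The one point that genuinely needs care — and the main obstacle — is justifying that the double sum over $(n', n'') \in S_1 \times A_2$ is a legitimate rearrangement of $\sum_{n \in S} \mathsf M_S(n) t^n$: one must verify that $(n', n'') \mapsto d_1 n' + d_2 n''$ is a bijection from $S_1 \times A_2$ onto $S$. Surjectivity is the existence half of the decomposition above; injectivity follows from $\gcd(d_1, d_2) = 1$ together with the defining property of the Apéry set, since if $d_1 n_1' + d_2 n_1'' = d_1 n_2' + d_2 n_2''$ with both second coordinates in $A_2$, then $d_1 \mid (n_1'' - n_2'')$ forces (via $n_i'' - (n_1'' - n_2'') \notin S_2$ when the difference is a positive multiple of $d_1$) $n_1'' = n_2''$ and hence $n_1' = n_2'$. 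Once this bijection is in hand the rest is bookkeeping, so I would state it as a short lemma (or cite \cite{numerical}) and then carry out the two-line computation above.
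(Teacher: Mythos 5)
Your proposal is correct and follows essentially the same route as the paper: both hinge on the observation that $n'$ is maximal in the expression $n = d_1n' + d_2n''$ exactly when $n'' \in \Ap(S_2;d_1)$, which makes $(n',n'') \mapsto d_1n' + d_2n''$ a bijection from $S_1 \times A_2$ onto $S$, after which the harmonic hypothesis lets the double sum factor as claimed. Your explicit verification of injectivity via $\gcd(d_1,d_2)=1$ and the Ap\'ery condition is just a slightly more detailed rendering of the paper's one-line ``the converse holds since $\gcd(d_1,d_2)=1$.''
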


\begin{proof}
The key is that whenever $n = d_1n' + d_2n'' \in S$ with $n' \in S_1$ and $n'' \in S_2$, we have $n'$ maximal among all such expressions for $n$ if and only if $n'' \in \Ap(S_2;d_1)$.  Indeed, if $n'' - d_1 \in S_2$, then we can write $n = d_1(n' + d_2) + d_2(n'' - d_1)$, and the converse holds since $\gcd(d_1, d_2) = 1$.  This implies the coefficient of $t^n$ obtained from expanding the right hand side of the first equality is $\mathsf M_{S_1}(n') + \mathsf M_{S_2}(n'')$, so the harmonic assumption on $S$ proves the first equality.  An analogous argument proves the second equality.  
\end{proof}

\section{Numerical semigroups with 2 generators}
\label{sec:2gens}

In this section, we restrict our attention to the case $S = \<n_1, n_2\>$.  

\begin{thm}\label{t:2gens}
If $S = \<n_1, n_2\>$, then
$$\sum_{n \in S} \widehat \chi_{\mathsf M_S}(\Delta_n) t^n = -n_1t^{n_1n_2} \qquad \text{and} \qquad \sum_{n \in S} \widehat \chi_{\mathsf m_S}(\Delta_n) t^n = -n_2t^{n_1n_2}.$$
\end{thm}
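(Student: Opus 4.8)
The plan is to combine Theorem~\ref{t:augmentedbigthm} with the completely transparent factorization structure of a $2$-generated numerical semigroup. Since $\gcd(n_1,n_2) = 1$, every $n \in S$ has a unique factorization $n = an_1 + bn_2$ with $a \ge 0$ and $0 \le b < n_1$; every other factorization of $n$ has the form $(a - jn_2,\, b + jn_1)$ for some $j \ge 1$, and because $n_1 < n_2$ its length $(a+b) + j(n_1 - n_2)$ is strictly smaller than $a+b$. Hence this distinguished factorization realizes the maximum length, so $\mathsf M_S(n) = a + b$, and moreover $n - n_1 \in S$ if and only if $a \ge 1$. From the same parametrization one reads off $\mathcal H(S;t) = \tfrac{1}{1-t^{n_1}}\cdot\tfrac{1-t^{n_1n_2}}{1-t^{n_2}}$ (consistent with Theorem~\ref{t:bigthm}), so Theorem~\ref{t:augmentedbigthm} rearranges to
$$\sum_{n \in S} \widehat\chi_{\mathsf M_S}(\Delta_n)\, t^n = (1-t^{n_1})(1-t^{n_2})\,\mathcal H_{\mathsf M_S}(S;t) - \lambda(t)(1 - t^{n_1n_2}),$$
and it remains only to compute $\mathcal H_{\mathsf M_S}(S;t)$ explicitly and then simplify.

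For the first step I would multiply $\mathcal H_{\mathsf M_S}(S;t)$ by $1 - t^{n_1}$, which produces the coefficient $\mathsf M_S(n) - \mathsf M_S(n-n_1)$ of $t^n$ (reading $\mathsf M_S$ as $0$ off $S$). If $a \ge 1$ then $(a-1,b)$ is the distinguished factorization of $n - n_1$, so that coefficient equals $1$; if $a = 0$ then $n = bn_2$ with $0 \le b < n_1$, in which case $n - n_1 \notin S$ and the coefficient is $\mathsf M_S(bn_2) = b$. Summing the two families separately gives
$$(1 - t^{n_1})\,\mathcal H_{\mathsf M_S}(S;t) = \frac{t^{n_1}(1 - t^{n_1n_2})}{(1 - t^{n_1})(1 - t^{n_2})} + \sum_{b=0}^{n_1 - 1} b\, t^{bn_2}.$$

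Next I would multiply through by $1 - t^{n_2}$, using the telescoping identity $(1-t^{n_2})\sum_{b=0}^{n_1-1} b\, t^{bn_2} = \sum_{b=1}^{n_1-1} t^{bn_2} - (n_1-1)t^{n_1n_2}$ to express everything through geometric series. Subtracting $\lambda(t)(1 - t^{n_1n_2}) = \tfrac{t^{n_1}(1-t^{n_1n_2})}{1-t^{n_1}} + \tfrac{t^{n_2}(1-t^{n_1n_2})}{1-t^{n_2}}$, the two terms with denominator $1 - t^{n_1}$ cancel, and collapsing the surviving terms with denominator $1 - t^{n_2}$ via $\sum_{b=1}^{n_1-1}t^{bn_2} = \tfrac{t^{n_2} - t^{n_1n_2}}{1 - t^{n_2}}$ leaves exactly $-t^{n_1n_2} - (n_1-1)t^{n_1n_2} = -n_1 t^{n_1n_2}$. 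Running the identical computation with the roles of $n_1$ and $n_2$ interchanged, now parametrizing $n = an_1 + bn_2$ with $0 \le a < n_2$ (which realizes $\mathsf m_S(n) = a + b$ precisely because $n_1 < n_2$), yields $\sum_{n \in S}\widehat\chi_{\mathsf m_S}(\Delta_n)\, t^n = -n_2 t^{n_1n_2}$.

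The one genuinely delicate point is the boundary case $a = 0$ in the second paragraph: one must verify that $bn_2 - n_1 \notin S$ whenever $0 \le b < n_1$ (equivalently, that $a$ is the largest number of copies of $n_1$ that can appear in a factorization of $n$), which follows from the classification of factorizations recalled in the first paragraph. Everything after that is routine bookkeeping with geometric and telescoping sums. A direct case analysis of $\widehat\chi_{\mathsf M_S}(\Delta_n)$ according to the possible shapes of $\Delta_n$---the void complex, a single vertex, an edge missing one vertex, the full $1$-simplex, or the disconnected $2$-point complex (which occurs precisely at $n = n_1n_2$ by Theorem~\ref{t:bigthm})---is also feasible, but it forces one to track $\mathsf M_S$ at $n$, $n - n_1$, $n - n_2$, and $n - n_1 - n_2$ simultaneously and is considerably messier than the generating-function route above.
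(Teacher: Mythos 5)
Your proof is correct, but it takes a genuinely different route from the paper's. The paper works directly at the level of the complexes: since $\mathsf M_S(n+n_1)=\mathsf M_S(n)+1$ holds for \emph{every} $n\in S$ when $k=2$, and since $n_1n_2$ is the unique element whose squarefree divisor complex is the disconnected two-point complex, the paper observes that for every $n\ne n_1n_2$ the complex $\Delta_n$ is a single vertex or the full simplex, in which case $\widehat\chi_{\mathsf M_S}(\Delta_n)=0$ by an immediate cancellation, and then evaluates $\widehat\chi_{\mathsf M_S}(\Delta_{n_1n_2})=-n_1$ directly. This is exactly the case analysis you set aside at the end as ``considerably messier''; in fact it is the shorter argument here, because harmonicity kills all but one term for free. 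Your route instead computes $\mathcal H_{\mathsf M_S}(S;t)$ in closed form from the parametrization of factorizations (the distinguished factorization with $0\le b<n_1$ realizes the maximum length since $n_1<n_2$, and $n-n_1\in S$ iff its first coordinate is positive) and then extracts the numerator by rearranging Theorem~\ref{t:augmentedbigthm}; I checked the telescoping and the cancellation against $\lambda(t)(1-t^{n_1n_2})$, and they come out to $-n_1t^{n_1n_2}$ as claimed, with the $\mathsf m_S$ case genuinely symmetric. What your approach buys is an explicit closed form for $\mathcal H_{\mathsf M_S}(S;t)$ as a byproduct and an independent consistency check of Theorem~\ref{t:augmentedbigthm} in the two-generator case; what it costs is that it leans on Theorem~\ref{t:augmentedbigthm}, which the paper's proof does not need, and noticeably more bookkeeping.
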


\begin{proof}
It suffices to prove the first equality, as the second follows analogously.  We use the well-known fact that 
$$\Ap(S; n_1) = \{0, n_2, \ldots, (n_1 - 1)n_2\},$$
every element of which is uniquely factorable, and that $\mathsf M_S(n + n_1) = \mathsf M_S(n) + 1$ for every $n \in S$ \cite{numerical}.  As such, 
$$\begin{array}{r@{}c@{}l}
\widehat \chi_{\mathsf M_S}(\Delta_{n_1n_2})
&{}={}& \mathsf M_S(n_1n_2) - (\mathsf M_S(n_1n_2 - n_1) + 1) - (\mathsf M_S(n_1n_2 - n_2) + 1) \\
&{}={}& n_2 - (n_2 - 1 + 1) - (n_1 - 1 + 1)
= -n_1.
\end{array}$$
For all other elements $n \ne n_1n_2$, the complex $\Delta_n$ is either (i)~the a single vertex $1$, (ii)~the single vertex $2$, or (iii)~the full simplex $2^{[2]}$.  In each case, one readily checks that $\widehat \chi_{\mathsf M_S}(\Delta_n) = 0$, thereby completing the proof.  
\end{proof}

\begin{remark}\label{r:2genseasy}
It is known that for $S = \<n_1, n_2\>$, no two factorizations of a given element $n \in S$ have the same length, so 
$$\sum_{n \in S} \mathsf l_S(n) t^n = \sum_{n \in S} \mathsf |Z_S(n)| t^n = z(t) = \frac{1}{(1 - t^{n_1})(1 - t^{n_2})}$$
\end{remark}

\begin{remark}\label{r:2genscontrast}
The disparity between $\chi_{\mathsf M_S}(\Delta_n)$ and $\widehat \chi_{\mathsf M_S}(\Delta)$ is perhaps most exemplified in the case $S = \<n_1, n_2\>$.  Indeed, for $S = \<9, 11\>$, we have $\sum_{n \in S} \widehat \chi_{\mathsf M_S}(\Delta_n) t^n = -9t^{99}$ by Theorem~\ref{t:2gens}, whereas 
$$\begin{array}{r@{}c@{}l}
\displaystyle \sum_{n \in S} \chi_{\mathsf M_S}(\Delta_n) t^n
&{}={}& t^9 + t^{11} + t^{18} + t^{22} + t^{27} + t^{33} + t^{36} + t^{44} + t^{45} + t^{54} \\[-0.8em]
&& \phantom{t^9} + t^{55} + t^{63} + t^{66} + t^{72} + t^{77} + t^{81} + t^{88} + t^{90} - 7t^{99}
\end{array}$$
has one additional term for each element of $\Ap(S;n_1)$ and $\Ap(S;n_2)$.  
\end{remark}

\section{Future work}
\label{sec:futurework}

The $\omega$-primality invariant $\omega_S$, a semigroup-theoretic measure of nonunique factorization \cite{omegamonthly}, is also known to be eventually quasilinear over numerical semigroups.  More precisely, for all sufficiently large $n \in S$,
$$\omega(n + n_1) = \omega(n) + 1.$$
Additionally, it is known \cite{dynamicalg} that the domain of $\omega_S$ can be naturally extended to the quotient group $\ZZ$, i.e.\ $\omega_S:\ZZ \to \ZZ_{\ge 0}$, in such a way that sufficiently negative input values yield $0$.  In many cases, after the domain is extended in this way, the lower bound on $n$ after which quasilinearity holds for $\omega_S$ can be significantly lowered.  

\begin{prob}\label{prob:omegaprimality}
Find rational expressions for the power series $\sum_{n \in S} \omega_S(n) t^n$ and its extension $\sum_{n \in \ZZ} \omega_S(n) t^n$ in the style of Proposition~\ref{p:weightedeuler} or Theorem~\ref{t:augmentedbigthm}.  
\end{prob}

There are eventually quasipolynomial $S$-invariants that arise naturally in studying numerical semigroups whose period does not divide the product $n_1 \cdots n_k$.  For example, writing $\ell^\infty(a)$ for the component-wise maximum of $a \in \ZZ_{\ge 0}^k$, it is not hard to show
$$n \mapsto \min\{\ell^\infty(a) : a \in \mathsf Z(n)\}$$
is eventually quasilinear in $n$ with period dividing $n_1 + \cdots + n_k$.  As this often does not divide the product $n_1 \cdots n_k$, the rational expressions in Proposition~\ref{p:weightedeuler} and Theorem~\ref{t:augmentedbigthm} will not have numerators with finite degree.  

\begin{prob}\label{prob:otherperiods}
Develop an analogue of Proposition~\ref{p:weightedeuler} and Theorem~\ref{t:augmentedbigthm} for $S$-invariants whose periods do not divide the product $n_1 \cdots n_k$.  
\end{prob}

Given $n \in S$, define the simplicial complex $\nabla_n$ with vertex set $\mathsf Z(n)$ where $F \subset \mathsf Z(n)$ is a face of $\nabla_n$ whenever there is some generator appearing in every factorization in~$F$.  The complex $\nabla_n$ is topologically equivalent to $\Delta_n$ (this was first observed in \cite{minimalresolutionmonomialalgebras}), and thus is sometimes used in place of $\Delta_n$ when examining Hilbert series of numerical semigroups via Theorem~\ref{t:bigthm}.  

\begin{prob}\label{prob:factorizationcomplex}
Find labelings of the simplicial complex $\nabla_n$ so that the weighted and augmented Euler characteristic matches those of $\Delta_n$.  
\end{prob}



\section{Acknowledgements}

Much of this work was completed as part of the 2017 San Diego State Mathematics Research Experience for Undergraduates, NSF grant 1061366.


\end{document}